\documentclass[pdflatex,sn-mathphys-num]{sn-jnl}

\usepackage{graphicx}%
\usepackage{multirow}%
\usepackage{amsmath,amssymb,amsfonts}%
\usepackage{amsthm}%
\usepackage{mathrsfs}%
\usepackage[title]{appendix}%
\usepackage{xcolor}%
\usepackage{textcomp}%
\usepackage{manyfoot}%
\usepackage{booktabs}%
\usepackage{algorithm}%
\usepackage{algorithmicx}%
\usepackage{algpseudocode}%
\usepackage{listings}%

\theoremstyle{thmstyleone}%
\newtheorem{theorem}{Theorem}
\newtheorem{lemma}[theorem]{Lemma}
\newtheorem{corollary}[theorem]{Corollary}

\theoremstyle{thmstyletwo}%

\theoremstyle{thmstylethree}%
\newtheorem{definition}{Definition}%

\newcommand{\bbP}{\mathbb{P}}
\newcommand{\cE}{\mathcal{E}}
\newcommand{\cI}{\mathcal{I}}
\newcommand{\cV}{\mathcal{V}}
\newcommand*\dd{\mathop{}\!\mathrm{d}}
\newcommand{\E}[1]{$\times10^{#1}$}
\newcommand{\Erellow}{\mathcal{E}_\text{rel}^\text{low}}
\newcommand{\Erelup}{\mathcal{E}_\text{rel}^\text{up}}
\newcommand{\R}{\mathbb{R}}

\begin{document}

\title[Two-sided eigenvalue bounds for the Euler--Bernoulli beam]{Two-sided eigenvalue bounds for the Euler--Bernoulli beam}


\author[1]{\fnm{Jana} \sur{Burkotov\'a}}\email{jana.burkotova@upol.cz}
\equalcont{These authors contributed equally to this work.}

\author[1]{\fnm{Jitka} \sur{Machalov\'a}}\email{jitka.machalova@upol.cz}
\equalcont{These authors contributed equally to this work.}

\author*[2]{\fnm{Tom\'a\v{s}} \sur{Vejchodsk\'y}}\email{vejchod@math.cas.cz}
\equalcont{These authors contributed equally to this work.}

\affil[1]{\orgdiv{Faculty of Science}, \orgname{Palacký University Olomouc}, \orgaddress{\street{17. listopadu 12}, \city{Olomouc}, \postcode{CZ-77900}, \country{Czech Republic}}}

\affil*[2]{\orgdiv{Institute of Mathematics}, \orgname{Czech Academy of Sciences}, \orgaddress{\street{\v{Z}itn\'a 25}, \city{Praha}, \postcode{CZ-11567}, \country{Czech Republic}}}


\abstract{%
We derive novel guaranteed lower bounds for eigenvalues of the Euler--Bernoulli beam with variable bending stiffness. While the standard finite element Rayleigh--Ritz method automatically yields upper bounds, we obtain lower bounds by employing interpolation error estimates with the explicitly known value of the associated constant. This approach is especially efficient and easy to apply for piecewise constant bending stiffness. For general variable material parameters, we obtain guaranteed lower bounds through an auxiliary beam-bending problem. The first eigenvalue is of primary interest in applications because it represents the critical load that causes buckling of the beam. Our method is, however, suitable also for the higher buckling modes. In addition, it can be applied to the physically more relevant nonlinear Gao beam model with piecewise constant bending stiffness, which has the same first eigenvalue as the classical Euler--Bernoulli beam. The presented numerical experiments illustrate the performance of the proposed eigenvalue bounds, demonstrating their convergence rates.
}

\keywords{Lower eigenvalue bounds, Euler-Bernoulli beam, Gao beam, critical buckling, finite element method}



\maketitle

\section{Introduction}
Buckling of elastic columns and beams is a classical problem in structural mechanics with significant practical importance. 
The critical load at which a straight beam becomes unstable and buckles is determined by the smallest eigenvalue of the governing differential equation for bending \cite{Timoshenko1961}. 
	
For an elastic Euler--Bernoulli beam, the buckling eigenvalue problem can be formulated as finding the axial compressive force $P$ (eigenvalue) for which a nontrivial lateral deflection shape exists. 
Accurate determination of this critical load is crucial to ensure safety and reliability in engineering design. 
The higher eigenvalues corresponding to higher buckling modes can be of interest in specialized applications.

The Euler--Bernoulli beam buckling problem, being a fourth-order ODE, can be solved analytically for uniform material properties. In the case of a clamped-clamped beam, the smallest eigenvalue corresponds to a critical load $P_1 = 4\pi^2 EI/L^2$ for a uniform beam of length $L$ and bending stiffness $EI$, see for example \cite{Bazant2010}. 
However, for non-uniform beams, one resorts to numerical methods.

The Euler--Bernoulli buckling problem is typically discretized by finite elements and solved as a generalized eigenvalue problem. 
The finite element method, as a special case of the Rayleigh--Ritz method, is known to produce optimally convergent approximations of eigenvalues \cite{BabOsb:1991}, but it inherently approximates the exact eigenvalues from above, see e.g. \cite{StrFix2008}.
In particular, the smallest computed eigenvalue $P_{h,1}$ will always overestimate the true critical load $P_1$.
Therefore, we are interested in the computation of a \emph{lower bound} on $P_1$. Only a lower bound can guarantee a limiting load under which the buckling does not occur. 
Thus, lower bounds are necessary for an efficient and safe design.
Computing both lower and upper bounds provides us with a guaranteed bound on the error $|P_1-P_{h,1}|$. This error quantifies how much the computed critical load $P_{h,1}$ overestimates the true value $P_1$. Thus, it enables engineers to make decisions based on reliable information about the accuracy of computed approximations.

However, computing rigorous lower bounds for eigenvalues of differential operators is a challenging problem. 
Over the years, various methods have been developed to estimate or bound eigenvalues from below. 
Classical approaches include the use of comparison theorems and domain monotonicity \cite{Plum1990,Plum1991}, the Lehmann--Goerisch method \cite{BehMerPluWie2000,GoeHau1985,Lehmann1949,Lehmann1950}, as well as verified computing techniques \cite{Liu2015,LiuOis2013}. 
In recent works, finite element-based \textit{a posteriori} error estimation complemented with flux reconstruction techniques has been employed to derive guaranteed two-sided bounds for eigenvalues of self-adjoint operators \cite{CanDusMadStaVoh2017,CanDusMadStaVoh2018,CanDusMadStaVoh2020}. 
A comparison of modern methods for two-sided eigenvalue bounds for second-order eigenvalue problems can be found in \cite{Vejchodsky2018b}, and a study on flux reconstruction in the Lehmann--Goerisch method in \cite{Vejchodsky2018}.

Our current approach is based on the method presented in \cite{CarGal2014,CarGed2014,Liu2015,LiuOis2013}.
These works use nonconforming elements such as Cruzeix--Raviart and Morley finite elements. These elements possess a special property: the corresponding energy projector equals the interpolation operator. Utilizing the explicitly known constant in the interpolation error estimate, they derive computable lower bounds for eigenvalues of the Laplace and biharmonic operators.

Despite these advances, relatively few works have focused specifically on the Euler--Bernoulli beam eigenvalue problem for buckling in the context of guaranteed bounds. To our knowledge, guaranteed eigenvalue bounds for these beam problems are new. 

The Euler--Bernoulli beam is described by a fourth-order differential operator, and it is of great interest in mechanical engineering. 
Papers \cite{RadMachBur2020,RadMach2023,RadMach2024} investigate identification problems for nonlinear Gao beam \cite{Gao1996} and, in the process, derive certain inequalities for beam functions under various boundary conditions. In particular, the Wirtinger-type inequality, which is crucial for our considerations.

For post-buckling analysis, the nonlinear Gao beam model provides a more adequate framework than the classical Euler–Bernoulli formulation \cite{NetMach2023}. In that work, the first buckling eigenvalue of the Gao beam was analyzed and shown to coincide with the corresponding eigenvalue of the classical Euler–Bernoulli beam 
provided the bending stiffness is piecewise constant.
Consequently, any rigorous bounds on the first critical load obtained for the Euler–Bernoulli problem can be directly transferred to the Gao beam model. In addition, unlike the Euler–Bernoulli model, which yields only admissible buckling mode shapes, the Gao beam model guarantees the existence of three distinct solutions associated with different post-buckling states, thus offering a substantially richer description of the buckling behaviour. Let us note that the relation between the higher eigenvalues of Euler-Bernoulli and Gao beams remains unexplored.

In this paper, we develop a simple and effective procedure to obtain two-sided bounds for the eigenvalues of a clamped Euler--Bernoulli beam with both piecewise constant and varying bending stiffness.
Our approach uses the conforming cubic Hermite finite elements to compute upper bounds on eigenvalues. 
Lower bounds are obtained by an easy-to-evaluate formula involving an interpolation constant. 
In the case of piecewise constant bending stiffness, we explicitly determine the value of this constant and obtain effective eigenvalue bounds virtually without any computational overhead beyond the standard finite element solution.
Furthermore, we demonstrate how to generalize these bounds to the case of generally varying material properties. 
This generalization can be seen as our main theoretical contribution.

The remaining part of this paper is organized as follows. 
Section~\ref{se:beam} introduces the mathematical formulation of the Euler--Bernoulli beam eigenvalue problem, including its discretization by cubic Hermite finite elements. 
In Section~\ref{se:lowerbounds}, we review the theoretical concept of lower bounds on eigenvalues, define the Hermite interpolation operator and prove its important property.
Section~\ref{se:pwconst} proves guaranteed lower bounds on eigenvalues of the Euler--Bernoulli beam with piecewise constant bending stiffness.
Section~\ref{eq:variablestiffness} generalizes these bounds to the case of variable bending stiffness.
Section~\ref{se:steppedbeam} applies derived bounds to the special case of the stepped beam with rectangular cross-section, constant Young's modulus, and piecewise constant thickness,
and comments on the applicability to the nonlinear Gao beam.
Section~\ref{eq:numex} contains numerical experiments that validate the theoretical bounds and present their orders of convergence for beams of rectangular and circular cross-sections with uniform and variable thickness.
Finally, Section~\ref{eq:conclusion} concludes the paper and suggests possible directions for future research.

\section{The beam eigenvalue problem and its numerical approximation}
\label{se:beam}

We consider an elastic Euler--Bernoulli beam of length $L$, with varying bending stiffness $E(x)I(x)$, $x \in (0, L)$, where $E = E(x) > 0$ is the Young's modulus and $I = I(x) > 0$ is the second moment of area of the cross-section. 
We assume that the product $E(x)I(x)$ is bounded in the sense that there exist constants $C_1 > 0$ and $C_2 > 0$ such that
\begin{equation}
  \label{eq:EIbounded}
  0 < C_1 \leq E(x)I(x) \leq C_2 \quad \forall x \in (0,L).
\end{equation}

The beam is assumed to be clamped at both ends ($x=0$ and $x=L$) and is subjected to an axial compressive force $P$ applied at the end point $x=L$.
Denoting by prime the derivative with respect to $x$, the governing differential equation for the deflection $u(x)$ (transverse displacement) in the buckling configuration is:
\begin{equation}
  \label{eq:strong-form}
  [E(x)I(x)\,u''(x)]'' + P\,u''(x) \;=\; 0, \qquad 0 < x < L,
\end{equation}
with clamped boundary conditions 
$$
  u(0)=u'(0) = 0, \qquad u(L)=u'(L) = 0.
$$ 
Equation \eqref{eq:strong-form} is a fourth-order ordinary differential equation. Physically, the term $(EI u'')''$ represents the bending resistance of the beam, while $P\,u''$ represents the effect of axial compression (often called the geometric stiffness term). 
Nontrivial solutions $u(x)$ exist only for specific values of $P$, which are the critical loads leading to buckling. These values correspond to the eigenvalues of the problem. The smallest such $P$ is the critical buckling load of the beam.
		
To formulate the problem in the variational form, we introduce the appropriate function space
$$
V = H^2_0(0,L) = \{ v \in H^2(0,L) : v(0)=v'(0)=v(L)=v'(L)=0 \}.
$$ 
It is the Sobolev space of functions with square integrable second generalized derivatives that satisfy the clamped boundary conditions.
Further, we introduce bilinear forms $a(u,v)$ and $b(u,v)$ on $V \times V$ as
$$
	a(u,v) = \int_0^L E(x) I(x)\, u''(x) v''(x)\dd x
	\quad\text{and}\quad
	b(u,v) = \int_0^L u'(x) v'(x) \dd x.
$$
Note that both these forms are symmetric and positive definite on $V$, and induce the following norms
$$
\|v\|_a^2 = a(v,v) = \int_0^L EI\, (v'')^2 \dd x
\quad\text{and}\quad 
\|v\|_b^2 = b(v,v) = \int_0^L (v')^2 \dd x
\quad \forall v \in V,
$$
where we do not explicitly indicate the dependence of $E$, $I$, and $v$ on $x$.

The weak (variational) form of the eigenvalue problem reads: 
find eigenvalues $P_i \in \mathbb{R}$, $i=1,2,\dots$, and a nonzero eigenfunctions $u_i \in V$ such that 
\begin{equation}
    \label{weak-form}
	a(u_i,v) = P_i\, b(u_i,v) \qquad \forall v \in V.
\end{equation}
This eigenvalue problem is well posed due to the spectral theory of compact operators. It possesses a countable sequence of positive eigenvalues $ 0 < P_1 \leq P_2 \leq \cdots$ converging towards infinity.
The critical buckling load given by $P_1$ is of special interest.

To solve problem \eqref{weak-form} approximately, we employ the usual conforming finite element method. Let us consider a finite element partition $0=x_1 < x_2 < \cdots < x_N < x_{N+1} = L$
defining $N$ elements $K_k = [x_k,x_{k+1}]$, $k=1,2,\dots,N$. 
The length of each element is denoted by $h_k = x_{k+1} - x_k$, $k=1,2,\dots,N$, and we set $h = \max_{k=1,2,\dots,N} h_k$.
Further, we consider the cubic Hermite finite element space 
$$
  V_h = \{ v_h \in V : v_h|_{K_k} \in \bbP^3(K_k) \text{ for all } k=1,2,\dots,N\},
$$
where $\bbP^3(K_k)$ denotes the space of cubic polynomials on the interval $K_k$.
Note that the requirement $v_h \in V$ implies that $V_h \subset C^1([0,L])$.
Approximate eigenvalues $P_{h,i} \in \mathbb{R}$, $i=1,2,\dots,\operatorname{dim} V_h$, and the corresponding approximate eigenfunctions $u_{h,i} \in V_h \setminus \{0\}$ are determined by the identity
\begin{equation}
	\label{eq:fem}
	a(u_{h,i},v_h) = P_{h,i}\, b(u_h,v_h) \qquad \forall v_h \in V_h.
\end{equation}
It is well known that by the Courant–Fischer–Weyl min-max principle, the discrete values $P_{h,i}$ approximate the exact eigenvalues $P_i$ from above \cite{BabOsb:1991,StrFix2008}.

\section{Theoretical lower bounds and Hermit interpolation}	
\label{se:lowerbounds}

In this section, we first use theoretical results presented in \cite{CarGal2014,CarGed2014,Liu2015,LiuOis2013}
to derive lower bounds on eigenvalues $P_i$ of problem \eqref{weak-form}.
For the reader's convenience, we formulate here the crucial result of \cite[Theorem~2.1]{Liu2015} using our notation.

\begin{definition}
\label{de:energyproj}
For any $u \in V$, define the energy projection $\Pi_h u \in V_h$ as the unique function satisfying
$$
  a(u - \Pi_h u,v_h) = 0 \quad \forall v_h \in V_h.
$$
The linear operator $\Pi_h : V \rightarrow V_h$ is called the energy projector.
\end{definition}

\begin{theorem} 
\label{th:lowerbound}
Let the exact eigenvalues $P_i$ defined by \eqref{weak-form} be approximated by $P_{h,i}$ given in \eqref{eq:fem}.
Let the energy projector $\Pi_h$ satisfy the inequality
\begin{equation}
  \label{eq:eprojestim}
  \| u - \Pi_h u \|_b \leq C_h \| u - \Pi_h u \|_a
  \quad \forall u \in V,
\end{equation}
where $C_h > 0$ depends on the mesh size $h$ in general.
Then,
$$
  \frac{P_{h,i}}{1 + P_{h,i} C_h^2} \leq P_i  
  \quad \text{for all } i = 1,2,\dots,\operatorname{dim}V_h.
$$
\end{theorem}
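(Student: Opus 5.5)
The plan is to prove the bound through the min-max principle, using the energy projector $\Pi_h$ to map exact eigenfunctions into $V_h$. Fix $i \le \dim V_h$ and let $E_i = \operatorname{span}\{u_1,\dots,u_i\} \subset V$, where the $u_j$ are $b$-orthonormal exact eigenfunctions. For every $u \in E_i$ we then have $\|u\|_a^2 \le P_i \|u\|_b^2$.

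\textbf{Step 1.} Because $\Pi_h$ is the $a$-orthogonal projection, $\|u-\Pi_h u\|_a^2 + \|\Pi_h u\|_a^2 = \|u\|_a^2$. Combining the triangle inequality with \eqref{eq:eprojestim} gives
\begin{equation*}
\|u\|_b \le \|\Pi_h u\|_b + C_h \|u-\Pi_h u\|_a .
\end{equation*}

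\textbf{Step 2.} We need to know that $\Pi_h E_i$ has dimension $i$. Suppose $\Pi_h u = 0$ for some $u \in E_i$. Then $\|u\|_a = \|u-\Pi_h u\|_a$, so $\|u\|_b \le C_h\|u\|_a \le C_h \sqrt{P_i}\,\|u\|_b$. If $C_h^2 P_i < 1$, this forces $u = 0$. If instead $C_h^2 P_i \ge 1$, the claim is trivial, because $P_{h,i}/(1+P_{h,i}C_h^2) < 1/C_h^2 \le P_i$. So we may assume $C_h^2 P_i < 1$, and then $\Pi_h E_i$ is an $i$-dimensional subspace of $V_h$.

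\textbf{Step 3.} By the discrete min-max principle, $P_{h,i} \le \max_{u\in E_i} \|\Pi_h u\|_a^2/\|\Pi_h u\|_b^2$. Take $u \in E_i$ attaining this maximum (the space is finite dimensional) and normalize so that $\|u\|_a^2 = 1$. Write $t = \|\Pi_h u\|_a^2 \in (0,1]$, so that $\|u-\Pi_h u\|_a^2 = 1-t$ and $\|\Pi_h u\|_b^2 \ge t/P_{h,i}$. Substituting into Step 1 and using $\|u\|_b^2 \ge 1/P_i$ gives
\begin{equation*}
P_i^{-1/2} \le \sqrt{t/P_{h,i}} + C_h\sqrt{1-t}.
\end{equation*}
By Cauchy--Schwarz, $\sqrt{t}\,\alpha + \sqrt{1-t}\,\beta \le \sqrt{\alpha^2+\beta^2}$, where $\alpha = P_{h,i}^{-1/2}$ and $\beta = C_h$. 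Hence $P_i^{-1} \le P_{h,i}^{-1} + C_h^2$, which rearranges to the claimed bound.

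The main obstacle is the direction of the inequality in Step 3. We have a lower bound on $\|\Pi_h u\|_b$ for the maximizer, but Step 1 needs an upper bound on $\|\Pi_h u\|_b$. I would avoid this by instead choosing the maximizer of the ratio $\|\Pi_h u\|_b^2/\|u\|_a^2$ complement appropriately, or more cleanly by working with the minimizer of the Rayleigh quotient of $\Pi_h u$ in the reverse form. Concretely, I would use the fact that $P_{h,i}^{-1} = \min_{W\subset V_h,\dim W=i}\max_{w\in W}\|w\|_b^2/\|w\|_a^2$ fails to give the needed direction. So I would rather apply the exact min-max to the $i$-dimensional space spanned by the discrete eigenfunctions pushed through a suitable map, or follow \cite{Liu2015} and prove the estimate $\|u\|_b^2 \le (P_{h,i}^{-1}+C_h^2)\|u\|_a^2$ for all $u$ in the $a$-orthogonal complement of the first $i-1$ discrete eigenfunctions. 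That estimate follows from the same Cauchy--Schwarz computation, applied with $\|\Pi_h u\|_b^2 \le P_{h,i}^{-1}\|\Pi_h u\|_a^2$ on that complement. The max-min characterization of $P_i$ over subspaces of codimension $i-1$ then yields $P_i \ge (P_{h,i}^{-1}+C_h^2)^{-1}$ directly. This second route is the one I would actually carry out.
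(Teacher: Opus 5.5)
The paper does not prove this theorem; it imports it from \cite[Theorem~2.1]{Liu2015}. The route you finally commit to is a correct, self-contained proof.

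Set $W=\{u\in V:\ a(u,u_{h,j})=0,\ j=1,\dots,i-1\}$. The one fact you leave implicit is Galerkin orthogonality: for $u\in W$ we have $a(\Pi_h u,u_{h,j})=a(u,u_{h,j})=0$. Hence $\Pi_h u$ lies in the span of $u_{h,i},\dots,u_{h,\dim V_h}$, and so $P_{h,i}\|\Pi_h u\|_b^2\le\|\Pi_h u\|_a^2$. Combining the triangle inequality, \eqref{eq:eprojestim}, Cauchy--Schwarz and the identity $\|\Pi_h u\|_a^2+\|u-\Pi_h u\|_a^2=\|u\|_a^2$ then gives $\|u\|_b^2\le(P_{h,i}^{-1}+C_h^2)\|u\|_a^2$ on $W$.

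All you need from the max-min principle is a dimension count. $W$ is the kernel of $i-1$ linear functionals, so it contains some $u\neq0$ from the $i$-dimensional space $E_i$, and for this $u$ we have $\|u\|_a^2\le P_i\|u\|_b^2$. Since $\|u\|_b>0$, this yields $P_i^{-1}\le P_{h,i}^{-1}+C_h^2$. Compared with the min-max argument through $\Pi_h E_i$ (your first route), this version needs neither the injectivity of $\Pi_h$ on $E_i$ nor the case split $C_h^2P_i<1$ of your Step~2.

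Your diagnosis of an obstacle in the first route is, however, wrong, and that route works as well. For the maximizer $u$, the discrete min-max principle gives $P_{h,i}\le\|\Pi_h u\|_a^2/\|\Pi_h u\|_b^2$, i.e.\ $\|\Pi_h u\|_b^2\le t/P_{h,i}$. You wrote this inequality the wrong way round. An upper bound is exactly what Step~1 requires, so $P_i^{-1/2}\le\|u\|_b\le\sqrt{t/P_{h,i}}+C_h\sqrt{1-t}\le(P_{h,i}^{-1}+C_h^2)^{1/2}$ follows as you computed, with Step~2 guaranteeing $\dim\Pi_h E_i=i$.

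You therefore have two valid proofs. The confused middle paragraph should simply be removed; it also states an incorrect min-max formula for $P_{h,i}^{-1}$, whose correct characterization is a max-min.
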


Further, we define the Hermite interpolation operator $\cI_h : V \rightarrow V_h$ and prove its important property. Note that the Sobolev embedding theorem yields $V = H^2_0(0,L) \subset C^1([0,L])$. In particular, classical derivatives $u'$ of any $u \in V$ are well defined in all points of $(0,L)$ as well as the corresponding one-sided derivatives at the end-points $0$ and $L$.

\begin{definition}
\label{de:Ih}
The Hermite interpolation operator $\cI_h : V \rightarrow V_h$ maps any $u \in V$
to a function $\cI_h u \in V_h$ which is determined by requirements
$$
  \cI_h u(x_i) = u(x_i)
  \quad\text{and}\quad 
  (\cI_h u)'(x_i) = u'(x_i)
$$  
for all nodes $x_i$, $i = 1,2,\dots,N+1$, of the finite element partition.
\end{definition}

Note that the requirement $(\cI_h u)'(x_i) = u'(x_i)$ has to be understood in the sense of one-sided derivatives of $\cI_h u$. 
For interior nodes $x_i$, $i=2,3,\dots,N$, we require both one-sided derivatives of $\cI_h u$ at point $x_i$ to be equal to the derivative $u'(x_i)$. For boundary nodes $x_1 = 0$ and $x_{N+1} = L$, this requirement means $(\cI_h u)'_+(x_1) = u'_+(x_1)$ and $(\cI_h u)'_-(x_{N+1}) = u'_-(x_{N+1})$, where the subindices $\pm$ stand for the derivative from the right and left, respectively.
Since the space $V_h$ consists of piecewise cubic polynomials, conditions in Definition~\ref{de:Ih} determine the function $\cI_h u$ uniquely.

To formulate the following lemma, we define constants $\kappa_k$ as minimal values of the bending stiffness $E(x)I(x)$ over elements $K_k$, respectively, namely
\begin{equation}
\label{eq:kappa}
\kappa_k = \min_{x \in K_k}{E(x)I(x)} 
\quad\text{for all }k=1,2,\dots,N,
\end{equation}
and introduce the mesh size scaled by the bending stiffness
\begin{equation}
  \label{eq:hEI}
  h_{EI}^2 = \max_{k=1,2,\dots,N} \frac{h_k^2}{\kappa_k},
\end{equation}
where we recall that $h_k$ stands for the length of $K_k$.
Using the scaled mesh size $h_{EI}$, we present the crucial inequality for the error of the interpolation operator $\cI_h$. 

\begin{lemma}
\label{le:interpestim}
The Hermite interpolation operator $\cI_h$ satisfies the estimate
\begin{equation*}
  \|u - \cI_h u\|_b \le \frac{h_{EI}}{2\pi} \|u - \cI_h u\|_a
  \quad\forall u \in V.
\end{equation*}
\end{lemma}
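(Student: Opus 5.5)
Set $e = u - \cI_h u$ and work element by element. On each $K_k$ the function $e$ belongs to $H^2(K_k)$. The interpolation conditions of Definition~\ref{de:Ih} give $e(x_k)=e(x_{k+1})=0$ and $e'(x_k)=e'(x_{k+1})=0$. The first pair implies $\int_{K_k} e' \dd x = 0$. So $w := e'|_{K_k}$ lies in $H^1(K_k)$, vanishes at both endpoints, and has zero mean.

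The key tool is a sharp Wirtinger-type inequality for such functions, namely $\|w\|_{L^2(K_k)} \le \frac{h_k}{2\pi}\|w'\|_{L^2(K_k)}$. This is the inequality alluded to in the introduction \cite{RadMachBur2020,RadMach2023,RadMach2024}. I would prove it by a variational/spectral argument. The minimizer of the Rayleigh quotient $\int (w')^2/\int w^2$ over $H^1_0(K_k)$ with $\int w=0$ satisfies $-w'' = \lambda w + c$ with a Lagrange multiplier $c$.

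Solving this ODE with $w(x_k)=w(x_{k+1})=0$ and zero mean shows the admissible spectrum splits into two families:
\begin{itemize}
\item antisymmetric modes, with $\lambda = (2\pi m/h_k)^2$;
\item symmetric modes, with $\sqrt\lambda\, h_k/2$ solving $\tan t = t$, whose smallest root $t\approx 4.493 > \pi$.
\end{itemize}
Hence the minimal value is $\lambda_{\min} = (2\pi/h_k)^2$, attained by $w=\sin(2\pi(x-x_k)/h_k)$.

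An alternative route avoids the ODE. Since $w\in H^1_0$, extend it to a periodic function and use Fourier series on an interval of length $h_k$. The zero-mean condition kills the constant mode, and Parseval then gives the constant $h_k/(2\pi)$ directly. I would present this version because it is shortest. I expect the main obstacle to be justifying sharpness carefully, but for the lemma only the inequality is needed, and the periodic Fourier argument suffices.

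With the local inequality in hand, summation is routine:
\[
\|e\|_b^2=\sum_k \int_{K_k}(e')^2 \le \sum_k \frac{h_k^2}{4\pi^2}\int_{K_k}(e'')^2 \le \sum_k \frac{h_k^2}{4\pi^2\kappa_k}\int_{K_k}EI\,(e'')^2 \le \frac{h_{EI}^2}{4\pi^2}\|e\|_a^2 .
\]
The middle step uses $EI\ge\kappa_k$ on $K_k$ from \eqref{eq:kappa}, and the last uses the definition \eqref{eq:hEI}. Taking square roots yields the claimed estimate.
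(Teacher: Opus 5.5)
Your proof is correct and follows the paper's argument. You apply Wirtinger's inequality to $e'|_{K_k}\in H^1_0(K_k)$, which has zero mean because $e$ vanishes at the nodes, and then sum over the elements using $EI\ge\kappa_k$ and the definition of $h_{EI}$; the only difference is that you also sketch a proof of the Wirtinger inequality (the periodic Fourier argument suffices), whereas the paper simply cites it.
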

\begin{proof}
Let us recall Wirtinger's inequality
\begin{equation}
\label{eq:Almansi}
\int_\alpha^\beta \big(f(x) \big)^2 \dd x \ \leq\ \frac{(\beta - \alpha)^2}{4\pi^2} \int_\alpha^\beta \big( f'(x) \big)^2 \dd x
\end{equation}
valid for all functions $f \in H^1_0(\alpha,\beta)$ such that $\int_{\alpha}^{\beta} f(x) \dd x=0$, see \cite{Mitr1970}. Note that the Sobolev space $H^1_0(\alpha,\beta)$ consists of all $f \in L^2(\alpha,\beta)$ such that their distributional derivative lies in $L^2(\alpha,\beta)$ and $f(\alpha) = f(\beta) = 0$.

Now, let us consider an arbitrary $u \in V$ and set $w = u - \cI_h u$. 
Notice that $w|_{K_k} \in H^2_0(K_k)$ and consequently $w'|_{K_k} \in H^1_0(K_k)$ for all elements $K_k = [x_k, x_{k+1}]$, $k=1,2,\dots,N$. 
Since $\int_{x_k}^{x_{k+1}} w'(x) \dd x = w(x_{k+1}) - w(x_k) = 0$,
we can apply the Wirtinger's inequality \eqref{eq:Almansi} to $w'(x)$ on all intervals $K_k$ and obtain
\begin{multline*}
  \| w \|_b^2 = \int_0^L \left( w' \right)^2 \dd x 
  = \sum_{k=1}^N \int_{x_k}^{x_{k+1}} \left( w' \right)^2 \dd x 
  \leq \sum_{k=1}^N \frac{h_k^2}{4\pi^2 \kappa_k} \int_{x_k}^{x_{k+1}} \kappa_k \left( w'' \right)^2 \dd x 
\\
  \leq  
  \frac{1}{4\pi^2} \left(\max_{k=1,2,\dots,N} \frac{h_k^2}{\kappa_k}\right) \sum_{k=1}^N \int_{x_k}^{x_{k+1}} E(x)I(x) \left( w'' \right)^2 \dd x 
  = \frac{h_{EI}^2}{4\pi^2} \| w \|_a^2.
\end{multline*}
Note that we again do not explicitly indicate the dependence of $w$ on $x$.
\end{proof}

\section{Piecewise constant bending stiffness}
\label{se:pwconst}

Within this section, we consider the bending stiffness $E(x)I(x)$ to be piecewise constant.
More precisely, we assume partition $0=y_1 < y_2 < \dots y_r < y_{r+1}=L$ of the interval $[0,L]$ into $r$ mutually disjoint segments $I_j = (y_j,y_{j+1})$, $j=1,2,\dots,r$,
and a positive constant value of the bending stiffness $E(x)I(x)$ on each segment $I_j$, $j=1,2,\dots,r$. 
Further, we assume that the finite element partition $0=x_1 < x_2 < \cdots < x_N < x_{N+1} = L$ is aligned with the partition $0=y_1 < y_2 < \dots y_r < y_{r+1}=L$ in the sense that for all $j=1,2,\dots,r+1$ exists $k \in \{1,2,\dots,N+1\}$ such that $y_j = x_k$.
In short, we say that the finite element partition aligns with the piecewise constant bending stiffness $E(x)I(x)$.

\begin{lemma}
\label{le:auIhu}
Let the bending stiffness $E(x)I(x)$ be piecewise constant and let the finite element partition align with it. 
Given any $u \in V$, the Hermite interpolation operator $\cI_h$ satisfies the identity
$$
  a(u - \cI_h u, v_h) = 0 \quad \forall v_h \in V_h.
$$
\end{lemma}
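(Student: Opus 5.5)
The plan is to split the integral defining $a(u-\cI_h u,v_h)$ over the finite elements and integrate by parts twice on each element. Throughout, we use that $E(x)I(x)$ equals a constant $c_k>0$ on each $K_k$, which is exactly what the alignment assumption gives. Set $w = u - \cI_h u$. Then
$$
  a(w,v_h) = \sum_{k=1}^N c_k \int_{x_k}^{x_{k+1}} w''\, v_h'' \dd x .
$$
Since $v_h|_{K_k}\in\bbP^3(K_k)$, the derivative $v_h''$ is affine on $K_k$ and $v_h'''$ is constant there. This regularity is what makes the elementwise integration by parts legitimate. We only need $w\in H^2(K_k)$, hence $w'\in H^1(K_k)$ is absolutely continuous and $w\in C^1(K_k)$.

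On each element we integrate by parts twice:
$$
  \int_{x_k}^{x_{k+1}} w''\, v_h'' \dd x
  = \bigl[ w' v_h'' \bigr]_{x_k}^{x_{k+1}} - \bigl[ w\, v_h''' \bigr]_{x_k}^{x_{k+1}} + \int_{x_k}^{x_{k+1}} w\, v_h'''' \dd x .
$$
The volume term vanishes because $v_h''''=0$ on $K_k$. Both boundary terms vanish because Definition~\ref{de:Ih} gives $w(x_k)=w(x_{k+1})=0$ and $w'(x_k)=w'(x_{k+1})=0$. For $w'$ at the nodes we use one-sided derivatives of $\cI_h u|_{K_k}$ from inside the element. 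These coincide with $u'$ at the nodes as required in the definition, and $u'$ itself is continuous since $V\subset C^1([0,L])$. Hence every elementwise integral is zero. Multiplying by $c_k$ and summing gives $a(w,v_h)=0$ for all $v_h\in V_h$.

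The only delicate point is the role of the piecewise constant stiffness and the alignment. One might try to integrate by parts globally and argue with jumps of $EI\,v_h''$ at the nodes, but that approach is unnecessary. Because we work element by element and $w$ together with $w'$ vanishes at every node, the boundary terms die individually, so no jump terms of $c_k v_h''$ or $c_k v_h'''$ across nodes ever appear. What really matters is that $c_k$ is constant on each $K_k$, so it can be pulled out and $v_h''$ stays polynomial. If $EI$ varied within an element, the term $\int w\,(EI\,v_h'')''$ would not vanish, and the identity would fail.

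Alignment guarantees that no element straddles a discontinuity of $EI$. I would remark on this as the key assumption.

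A consequence, noted after the proof, is that $\cI_h = \Pi_h$, since the energy projection is unique by Definition~\ref{de:energyproj}. This links Lemma~\ref{le:interpestim} to Theorem~\ref{th:lowerbound} with $C_h = h_{EI}/(2\pi)$.
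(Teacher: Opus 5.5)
Your proof is correct and matches the paper's: you split $a(u-\cI_h u, v_h)$ over the elements, pull out the constant stiffness on each $K_k$ (which alignment guarantees), and integrate by parts twice. The boundary terms vanish because $w$ and $w'$ vanish at every node, and the remaining integral vanishes because $v_h''''=0$ elementwise.
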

\begin{proof}
Notice that the value $\kappa_k$ given by \eqref{eq:kappa} trivially equals the
constant value of the bending stiffness $E(x)I(x)$ on the element $K_k$ for all $k=1,2,\dots,N$.
Consider any $u\in V$ and $v_h \in V_h$.
Denoting $w = u - \cI_h u$ for brevity, we use integration by parts to obtain the identity
\begin{multline*}
	a(w, v_h) = \int_0^L E(x)I(x)\,w'' v_h'' \dd x 
    = \sum_{k=1}^N \kappa_k \int_{x_k}^{x_{k+1}} w'' v_h'' \dd x 
  \\
	= -\sum_{k=1}^N \kappa_k \int_{x_k}^{x_{k+1}} w' v_h''' \dd x 
	= \sum_{k=1}^N \kappa_k  \int_{x_k}^{x_{k+1}} w v_h'''' \dd x = 0.
\end{multline*}	
Boundary terms in the integration by parts vanish due to the definition of $\cI_h$, and the last equality holds since $v_h$ is a cubic polynomial in every element $K_k = [x_k,x_{k+1}]$.
\end{proof}

Comparing the statement of this lemma with Definition~\ref{de:energyproj}, we come to an important conclusion.

\begin{corollary}
\label{eq:Pih=Ih}
If the bending stiffness $E(x)I(x)$ is piecewise constant and if the finite element partition aligns with it, then the energy projector is equal to the Hermite interpolation operator, i.e,
$$
  \Pi_h = \cI_h.
$$
\end{corollary}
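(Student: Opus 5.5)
The plan is to compare Lemma~\ref{le:auIhu} with Definition~\ref{de:energyproj} and use the uniqueness of the energy projection. Fix an arbitrary $u \in V$. By Definition~\ref{de:Ih}, $\cI_h u \in V_h$. By Lemma~\ref{le:auIhu}, it satisfies $a(u - \cI_h u, v_h) = 0$ for all $v_h \in V_h$. So $\cI_h u$ is a function in $V_h$ with exactly the Galerkin orthogonality property that characterizes $\Pi_h u$.

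It remains to confirm that this property determines the element of $V_h$ uniquely, which justifies the word ``unique'' in Definition~\ref{de:energyproj}. Suppose $z_1, z_2 \in V_h$ both satisfy $a(u - z_j, v_h) = 0$ for all $v_h \in V_h$. Subtracting gives $a(z_1 - z_2, v_h) = 0$ for every $v_h \in V_h$. Choosing $v_h = z_1 - z_2 \in V_h$ gives $\|z_1 - z_2\|_a^2 = 0$.

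Since $a$ is positive definite on $V$, we conclude $z_1 = z_2$. Positive definiteness follows from \eqref{eq:EIbounded}: we have $a(v,v) \ge C_1 \|v''\|_{L^2}^2$, and $v'' = 0$ together with the clamped boundary conditions forces $v = 0$. Hence $\Pi_h u = \cI_h u$. As $u \in V$ was arbitrary, the two linear operators coincide, $\Pi_h = \cI_h$.

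There is no real obstacle here; all the analytical work sits in Lemma~\ref{le:auIhu}. The only point deserving a sentence is the uniqueness of the energy projection via positive definiteness of $a$ on $V_h$, which the paper has already noted.
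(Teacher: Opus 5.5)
Your proof is correct and takes the same approach as the paper, which obtains the corollary simply by comparing Lemma~\ref{le:auIhu} with Definition~\ref{de:energyproj}. Your extra uniqueness argument, based on the positive definiteness of $a$, just spells out what the word ``unique'' in that definition relies on.
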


\begin{theorem}
\label{th:two-sided}
Let the bending stiffness $E(x)I(x)$ be piecewise constant and let the finite element partition align with it. 
Then the eigenvalues $P_i$ given by \eqref{weak-form} are bounded by the approximate eigenvalues $P_{h,i}$ defined in \eqref{eq:fem} as follows
\begin{equation}
  \label{eq:lowerbound}
  \frac{P_{h,i}}{1 + P_{h,i} C_h^2} \leq P_i \leq P_{h,i}
  \quad \text{for all } i = 1,2,\dots,\operatorname{dim}V_h,    
\end{equation}
where
\begin{equation}\label{eq:Ch}
  C_h = \frac{h_{EI}}{2\pi} \approx 0.1592 h_{EI}
  \quad\text{and } h_{EI} \text{ is given by \eqref{eq:hEI}}.
\end{equation}
\end{theorem}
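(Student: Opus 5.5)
The plan is to assemble the statement from the results already established; no new estimate is needed. The upper bound $P_i \le P_{h,i}$ is the standard consequence of the Courant--Fischer--Weyl min-max principle for the conforming discretization $V_h \subset V$, as noted after \eqref{eq:fem}. I will simply recall it: the min-max characterization of $P_{h,i}$ is taken over subspaces of $V_h$, and these are also admissible subspaces of $V$, so $P_{h,i}$ can only be larger than $P_i$.

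For the lower bound I will invoke Theorem~\ref{th:lowerbound}. This reduces the task to checking its hypothesis \eqref{eq:eprojestim} with $C_h = h_{EI}/(2\pi)$. Under the assumptions that $E(x)I(x)$ is piecewise constant and the mesh is aligned with it, Corollary~\ref{eq:Pih=Ih} (based on Lemma~\ref{le:auIhu}) gives $\Pi_h = \cI_h$. Hence, for every $u\in V$,
\[
\|u-\Pi_h u\|_b = \|u-\cI_h u\|_b \le \frac{h_{EI}}{2\pi}\,\|u-\cI_h u\|_a = \frac{h_{EI}}{2\pi}\,\|u-\Pi_h u\|_a
\]
by Lemma~\ref{le:interpestim}. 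This is exactly \eqref{eq:eprojestim} with the constant \eqref{eq:Ch}. Theorem~\ref{th:lowerbound} then yields $P_{h,i}/(1+P_{h,i}C_h^2)\le P_i$ for all $i\le \dim V_h$.

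The only point requiring care is the identification $\Pi_h=\cI_h$. Lemma~\ref{le:auIhu} shows that $a(u-\cI_h u,v_h)=0$ for all $v_h\in V_h$, and $\cI_h u\in V_h$. Uniqueness of the energy projection, which follows from positive definiteness of $a$ on $V_h$, then forces $\Pi_h u=\cI_h u$. This step is already covered by the Corollary, so I expect no real obstacle. For completeness, I would also remark that $\kappa_k>0$ by \eqref{eq:EIbounded}, so $h_{EI}$ is finite. Finally, the numerical value $1/(2\pi)\approx 0.1592$ is just arithmetic.
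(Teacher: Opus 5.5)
Your proposal is correct and matches the paper's proof: the upper bound comes from the min-max principle for the conforming space $V_h\subset V$, and the lower bound comes from Theorem~\ref{th:lowerbound}. Its hypothesis \eqref{eq:eprojestim} is verified with $C_h=h_{EI}/(2\pi)$ by combining $\Pi_h=\cI_h$ (Corollary~\ref{eq:Pih=Ih}) with Lemma~\ref{le:interpestim}.
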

\begin{proof}
The upper bound follows from the Courant--Fischer--Weyl min-max principle, and it is a well-known property of the Ritz--Galerkin method applied to eigenvalue problems.
The lower bound comes from Theorem~\ref{th:lowerbound}. We use the fact that $\Pi_h = \cI_h$, see Corollary~\ref{eq:Pih=Ih}, and verify assumption~\eqref{eq:eprojestim} by Lemma~\ref{le:interpestim}.
\end{proof}

The lower bound in \eqref{eq:lowerbound} is given by a simple and explicit formula in terms of the approximate eigenvalues $P_{h,i}$ and the quantity $h_{EI}$ is easily computable from the constant values of the bending stiffness $E(x)I(x)$ and element lengths.
The assumption of piecewise constant bending stiffness is crucial here. The following section shows how to generalize this result to the case of generally varying bending stiffness $E(x)I(x)$.

\section{Variable bending stiffness}
\label{eq:variablestiffness}

To derive lower bounds on eigenvalues for a beam with variable bending stiffness, we first introduce an auxiliary buckling problem. This auxiliary problem has piecewise constant bending stiffness $\kappa_k$, $k=1,2,\dots,N$, see \eqref{eq:kappa}.
Formally, we assemble values $\kappa_k$ into a piecewise constant function $\kappa = \kappa(x)$, $x \in [0,L]$, such that $\kappa(x) = \kappa_k$ for all $x \in K_k$, $k=1,2,\dots,N$.

The auxiliary buckling problem is simply defined as problem \eqref{weak-form} with the bending stiffness $E(x)I(x)$ replaced by the piecewise constant function $\kappa(x)$.
The corresponding weak formulation of this problem reads: find eigenvalues $\widetilde P_i \in \R$ and eigenfunctions $\tilde u_i \in V\setminus\{0\}$ such that
\begin{equation}
  \label{eq:auxweak}
  \tilde a(\tilde u_i,v) = \widetilde P_i b(\tilde u_i,v) \quad \forall v \in V,
\end{equation}
where
$$
  \tilde a(u,v) = \int_0^L \kappa(x) u''(x) v''(x) \dd x.
$$
Similarly to above, we consider the finite element formulation to find $\widetilde P_{h,i} > 0$ and $\tilde u_{h,i} \in V_h \setminus \{0\}$ such that
\begin{equation}
  \label{eq:auxfem}
  \tilde a(\tilde u_{h,i},v_h) = \widetilde P_{h,i} b(\tilde u_{h,i},v_h) \quad \forall v_h \in V_h.
\end{equation}

\begin{lemma}\label{le:aux}
  Let $P_i$, $i=1,2,\dots$, stand for the exact eigenvalues of the buckling problem \eqref{weak-form} with a generally varying bending stiffness $E(x)I(x)$ and let $\widetilde P_i$ be given by \eqref{eq:auxweak}. 
  Then 
  $$
    \widetilde P_i \leq P_i \quad\text{for all }i=1,2,\dots.
  $$
\end{lemma}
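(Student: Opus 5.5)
This is a comparison of Rayleigh quotients via the Courant--Fischer--Weyl min-max principle. The key observation is pointwise. By definition \eqref{eq:kappa}, $\kappa(x) = \kappa_k = \min_{y\in K_k} E(y)I(y) \le E(x)I(x)$ for every $x \in K_k$ and every $k$. At the finitely many nodes the value of $\kappa$ is ambiguous, but these form a set of measure zero and do not affect any integral. Consequently, for every $v \in V$,
$$
  \tilde a(v,v) = \int_0^L \kappa\,(v'')^2 \dd x \;\le\; \int_0^L E I\,(v'')^2 \dd x = a(v,v).
$$

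Before applying min-max, I would check that problem \eqref{eq:auxweak} is of the same type as \eqref{weak-form}. Since $\kappa(x) \ge C_1 > 0$ by \eqref{eq:EIbounded}, and $\kappa$ is bounded above by $C_2$, the form $\tilde a$ is symmetric, bounded and coercive on $V = H^2_0(0,L)$. The form $b$ is unchanged. Hence the same compact-operator spectral theory applies, and the eigenvalues $0<\widetilde P_1 \le \widetilde P_2 \le \cdots$ are well defined.

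Next I would write both sets of eigenvalues through the min-max characterization
$$
  P_i = \min_{\substack{S \subset V\\ \dim S = i}} \max_{v \in S\setminus\{0\}} \frac{a(v,v)}{b(v,v)},
  \qquad
  \widetilde P_i = \min_{\substack{S \subset V\\ \dim S = i}} \max_{v \in S\setminus\{0\}} \frac{\tilde a(v,v)}{b(v,v)}.
$$
Fix an $i$-dimensional subspace $S$. The pointwise inequality of Rayleigh quotients gives $\max_{S} \tilde a/b \le \max_{S} a/b$. Taking the minimum over all such $S$ then yields $\widetilde P_i \le P_i$.

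No step is a real obstacle. The only care needed is to confirm that the min-max principle holds for \eqref{eq:auxweak}, which the coercivity check above provides, and to note that the nodal ambiguity in $\kappa$ is irrelevant.
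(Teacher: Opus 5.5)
Your proposal is correct and follows the paper's proof. Both arguments rest on the pointwise inequality $\kappa(x) \le E(x)I(x)$ and the Courant--Fischer--Weyl min-max characterization applied to both problems; your extra checks (well-posedness of the auxiliary problem via $\kappa \ge C_1$, and the measure-zero ambiguity of $\kappa$ at the nodes) are details the paper leaves implicit.
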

\begin{proof}
Eigenvalue problems \eqref{weak-form} and \eqref{eq:auxweak} are self-adjoint and correspond to the inverse of a compact operator. 
Therefore, all eigenvalues of the problem \eqref{weak-form} can be characterized by the Courant–Fischer–Weyl min-max principle as
$$
  P_i = \min_{\cE \in \cV^{(i)}} \max_{u \in \cE} \frac{\int_0^L E(x)I(x)\, [u''(x)]^2 \dd x}{\int_0^L [u'(x)]^2 \dd x},
$$
where $\cV^{(i)}$ stands for the set of all $i$-dimensional subspaces of $V$.
Since $E(x)I(x) \geq \kappa(x)$ in $(0,L)$, we immediately have
$$
  P_i \geq \min_{\cE \in \cV^{(i)}} \max_{u \in \cE} \frac{\int_0^L \kappa(x) [u''(x)]^2 \dd x}{\int_0^L [u'(x)]^2 \dd x} = \widetilde P_i,
$$
where the last equality follows from the min-max principle for the auxiliary eigenvalue problem \eqref{eq:auxweak}.
\end{proof}

\begin{corollary}\label{co:two-sided}
The eigenvalues $P_i$ given by \eqref{weak-form} are bounded by the approximate eigenvalues $P_{h,i}$ and $\widetilde P_{h,i}$ defined in \eqref{eq:fem} and \eqref{eq:auxfem} as follows
$$
  \frac{\widetilde P_{h,i}}{1 + \widetilde P_{h,i} C_h^2} \leq P_i \leq P_{h,i}
  \quad \text{for all } i = 1,2,\dots,\operatorname{dim}V_h,
$$
where $C_h$ is given by \eqref{eq:Ch}.
\end{corollary}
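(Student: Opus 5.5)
The upper bound $P_i \le P_{h,i}$ needs nothing new. Problem \eqref{eq:fem} is the conforming Ritz--Galerkin discretization of \eqref{weak-form}, since $V_h \subset V$. The Courant--Fischer--Weyl min-max principle, taken over subspaces of $V_h$ rather than of $V$, therefore gives $P_i \le P_{h,i}$ for all $i = 1,2,\dots,\operatorname{dim}V_h$, exactly as in the proof of Theorem~\ref{th:two-sided}.

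For the lower bound, I will chain two inequalities. The first is $\widetilde P_i \le P_i$, which is Lemma~\ref{le:aux}. The second is a lower bound on $\widetilde P_i$ in terms of $\widetilde P_{h,i}$. To get it, I will apply Theorem~\ref{th:two-sided} to the auxiliary problem \eqref{eq:auxweak}. This is allowed because the auxiliary bending stiffness $\kappa(x)$ is piecewise constant, with constant value $\kappa_k$ on each element $K_k$. Its segments are exactly the finite element intervals, so the finite element partition aligns with $\kappa$ by construction.

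The point to check is that the constant $C_h$ in this application is the same as in \eqref{eq:Ch}. Theorem~\ref{th:two-sided} for the auxiliary problem uses the scaled mesh size built from the elementwise minima of $\kappa$. Since $\kappa$ is constant on $K_k$, we get $\min_{x\in K_k}\kappa(x) = \kappa_k$, so the scaled mesh size is again $h_{EI}^2 = \max_k h_k^2/\kappa_k$. Hence $C_h = h_{EI}/(2\pi)$ is unchanged.

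Theorem~\ref{th:two-sided} then gives $\widetilde P_{h,i}/(1+\widetilde P_{h,i}C_h^2) \le \widetilde P_i$. Combining this with Lemma~\ref{le:aux} yields the claimed lower bound, and the upper bound above completes the two-sided estimate. I do not expect a real obstacle. The only subtle step is confirming that Theorem~\ref{th:two-sided} and the constant $h_{EI}$ transfer to the auxiliary problem as described, which holds by construction.
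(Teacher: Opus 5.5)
Your proposal is correct and follows the paper's proof: you get the upper bound from the min-max principle for the conforming discretization, and the lower bound by chaining Theorem~\ref{th:two-sided} applied to the auxiliary problem \eqref{eq:auxweak} with Lemma~\ref{le:aux}. You also check explicitly that the auxiliary problem produces the same $h_{EI}$, since $\kappa$ equals $\kappa_k$ on each $K_k$; the paper leaves this implicit, and your check is correct.
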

\begin{proof}
The upper bound is well known and follows again from the Courant--Fischer--Weyl min-max principle. The lower bound is an immediate combination of Lemma~\ref{le:aux} and Theorem~\ref{th:two-sided} applied to the auxiliary problem \eqref{eq:auxweak}.
\end{proof}

The lower bound given in Corollary~\ref{co:two-sided} is easy to compute by solving the auxiliary beam buckling problem with piecewise constant bending stiffness $\kappa(x)$. It is given by a simple formula, in terms of the auxiliary eigenvalues $\widetilde P_{h,i}$ and the interpolation error constant $C_h = h_{EI}/(2\pi)$,  see \eqref{eq:Ch}, where $h_{EI}$ can be directly found from the minimal values of the bending stiffness on the individual elements and their lengths.

\section{Application to a stepped beam}
\label{se:steppedbeam}

A stepped beam is a typical example of a practically interesting case of a beam with piecewise constant thickness. It is a special case of a beam with piecewise constant bending stiffness as discussed above in Section~\ref{se:pwconst}. 
In particular, the stepped beam has a constant Young's modulus $E$ along the beam, a rectangular cross-section of a constant width $b$, and piecewise constant thickness $t = t(x)$. 

To be precise, we consider the same segments $I_j=(y_j, y_{j+1})$, $j=1,\dots,r$, as in Section~\ref{se:pwconst} and assume that the thickness $t = t(x)$ (and hence the second area moment $I(x) = b t^3(x) / 12$) is constant on each segment $I_j$, $j=1,2,\dots,r$. 
Denoting by $t_j$ the constant thickness on segment $I_j$, we have $E\,I(x) = E\,b\, t_j^3 / 12$ for all $x \in I_j$, $j=1,2,\dots,r$.
As above, we suppose that the finite element partition $0 = x_1 < x_2 < \ldots< x_N < x_{N+1} = L$ is aligned with segments $I_k=(y_k,y_{k+1})$, $k=1,\ldots,r$.

Introducing $\lambda_i = 12 P_i/(E b)$, the eigenvalue problem \eqref{weak-form} becomes equivalent to finding $\lambda_i \in \R$, $i=1,2,\dots$, and nonzero $u_i\in V$ such that
\begin{equation}
  \label{eq:steppedproblem}
  \widehat{a}(u_i,v) = \lambda_i b(u_i,v) \quad \forall v \in V,
\end{equation}
where
\begin{equation*}
  \widehat{a}(u,v) = \int_0^L t(x)^3\, u''(x) v''(x) \dd x.
\end{equation*}
The reasonable assumption $0 < t_{\min} \le t_k \le t_{\max}$, $k=1,\dots,N$, for the thickness values ensures that condition \eqref{eq:EIbounded} is satisfied, the form $\widehat{a}$ is positive definite, and, hence, this eigenvalue problem is well posed.

We discretize problem \eqref{eq:steppedproblem} by the cubic Hermite finite element method as above. We seek approximate eigenvalues $\lambda_{h,i} \in \R$, $i=1,2,\dots\operatorname{dim}V_h$, and nonzero approximate eigenfunctions $u_{h,i} \in V_h$ such that
\begin{equation}
	\label{eq:steppedfem}
	\widehat{a}(u_{h,i},v_h) = \lambda_{h,i} b(u_{h,i},v_h) \quad \forall v_h \in V_h.
\end{equation}
Note that problems \eqref{eq:steppedproblem} and \eqref{eq:steppedfem} are special cases of \eqref{eq:auxweak} and \eqref{eq:auxfem}, respectively, with $\kappa_k = t_k^3$.
Consequently, Theorem~\ref{th:two-sided} can be easily reformulated for the specific case of the stepped beam.

\begin{corollary}
\label{co:steppedtwo-sided}
Assume a stepped beam of length $L$ with
bending stiffness $E(x)I(x) = E\,b\,t(x)^3/12$,
constant Young modulus $E$, constant width $b$, 
and piecewise constant thickness $t(x)$.
Further assume that the finite element partition $0=x_1<x_2<\cdots<x_N<x_{N+1}=L$
aligns with the piecewise constant thickness $t(x)$. 
Then the eigenvalues $\lambda_i$ of \eqref{eq:steppedproblem} 
are bounded in terms of the approximate eigenvalues $\lambda_{h,i}$ defined in \eqref{eq:steppedfem} as
\begin{equation}
  \label{eq:lowerboundstepped}
  \frac{\lambda_{h,i}}{1+\lambda_{h,i}\widehat C_h^2}
  \leq \lambda_i \leq \lambda_{h,i}
  \quad\text{for all } i = 1,2,\dots,\operatorname{dim}V_h,
\end{equation}
where
$$
\widehat C_h^2 = \frac{1}{4\pi^2}
\max_{k=1,\ldots,N}
\frac{h_k^2}{t_k^3}.
$$
\end{corollary}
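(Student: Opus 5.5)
The plan is to obtain Corollary~\ref{co:steppedtwo-sided} as a direct specialization of Theorem~\ref{th:two-sided}, via the rescaling $\lambda_i = 12P_i/(Eb)$. Since $\widehat a(u,v) = \tfrac{12}{Eb}\,a(u,v)$ for all $u,v\in V$, the pair $(\lambda_i,u_i)$ solves \eqref{eq:steppedproblem} if and only if $(P_i,u_i)$ with $P_i = Eb\lambda_i/12$ solves \eqref{weak-form}. The same scaling relates \eqref{eq:steppedfem} to \eqref{eq:fem}, so $\lambda_{h,i} = 12P_{h,i}/(Eb)$. Alternatively, and more cleanly, I would regard \eqref{eq:steppedproblem} itself as a beam buckling problem with piecewise constant ``bending stiffness'' $t(x)^3$. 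All results of Sections~\ref{se:lowerbounds}--\ref{se:pwconst} are stated for an arbitrary bounded positive stiffness, so they apply verbatim with $E(x)I(x)$ replaced by $t(x)^3$.

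Taking the second route, I will check the hypotheses of Theorem~\ref{th:two-sided} for the stiffness $t^3$.

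\begin{itemize}
\item It is piecewise constant on the segments $I_j$.
\item It satisfies \eqref{eq:EIbounded} with $C_1=t_{\min}^3$ and $C_2=t_{\max}^3$.
\item The finite element partition aligns with it by assumption.
\item On each element $K_k$, the minimum in \eqref{eq:kappa} equals the constant value $t_k^3$, so $\kappa_k = t_k^3$.
\end{itemize}

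Hence the scaled mesh size \eqref{eq:hEI} becomes $h_{EI}^2=\max_k h_k^2/t_k^3$. The constant \eqref{eq:Ch} then gives $C_h^2 = h_{EI}^2/(4\pi^2) = \widehat C_h^2$. Theorem~\ref{th:two-sided} now yields \eqref{eq:lowerboundstepped} directly.

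As a consistency check along the first route, write $\widehat C_h$ for the constant above. With the physical stiffness, $\kappa_k = Ebt_k^3/12$, so $C_h^2 = \tfrac{12}{Eb}\widehat C_h^2$. Substituting $P_{h,i}=Eb\lambda_{h,i}/12$ into $P_{h,i}/(1+P_{h,i}C_h^2)$ gives
\[
\frac{Eb}{12}\,\frac{\lambda_{h,i}}{1+\lambda_{h,i}\widehat C_h^2}.
\]
Dividing by $Eb/12$ yields the same bound, so the two routes agree.

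There is no real obstacle. The only point needing care is to confirm that Lemma~\ref{le:auIhu}, and hence Corollary~\ref{eq:Pih=Ih}, hold for the stiffness $t^3$. This follows because the proof of Lemma~\ref{le:auIhu} uses only that the stiffness is constant on each element. I also need to track the scaling factor $12/(Eb)$ correctly so that $\widehat C_h$ carries no stray factor. The upper bound $\lambda_i\le\lambda_{h,i}$ is again the min-max principle for the conforming space $V_h\subset V$.
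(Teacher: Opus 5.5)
Your proposal is correct and takes the same route as the paper. The paper also observes that \eqref{eq:steppedproblem} and \eqref{eq:steppedfem} are the special case of \eqref{eq:auxweak} and \eqref{eq:auxfem} with $\kappa_k = t_k^3$, and then applies Theorem~\ref{th:two-sided} directly. Your extra check through $P_i = Eb\lambda_i/12$ correctly confirms that $\widehat C_h$ carries no stray factor of $12/(Eb)$, though the paper does not need that check.
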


As mentioned above, for the piecewise constant bending stiffness, the first eigenvalue of the Gao beam model \cite{Gao1996} is identical to the first eigenvalue of the Euler--Bernoulli beam \cite{NetMach2023}. This means that the lower bound \eqref{eq:lowerboundstepped} on $\lambda_1$ directly bounds also the critical load of the Gao beam. Differences will appear in post-buckling (nonlinear regime) analysis, which is beyond the scope of this paper. 
Nevertheless, an accurate approximation of $\lambda_1$ and its reliable lower and upper bounds may be used to initialize path-following methods for post-buckling analysis of the Gao beam.
Thus, bounds \eqref{eq:lowerboundstepped} could potentially be used in those studies to ensure a correctly bracketed initial bifurcation point.

\section{Numerical Experiments}
\label{eq:numex}

We now present numerical tests to verify the theoretical bounds derived above. 
All computations were performed using a finite element discretization with cubic Hermite elements for the clamped Euler--Bernoulli beam as described above.
We consider four cases:
(i) uniform beam with rectangular cross-section, 
(ii) stepped beam with rectangular cross-section,
(iii) uniform beam with circular cross-section,
(iv) conical beam with circular cross-section that decreases linearly along its length.

In cases (i) and (iii), the cross-section and material properties are constant and, hence, an explicit analytical expression for the eigenvalues is available. 
In contrast, in cases (ii) and (iv), the eigenvalue problem cannot be solved in closed form and numerical approximations must be employed. 

In all cases, we compute the finite element approximation of the first eigenvalue on a sequence of uniformly refined partitions and plot the corresponding convergence curves.  
All numerical experiments are conducted for the length $L = 1\,{\rm  m}$ and Young's modulus $E = 21 \times 10^{10}\, {\rm Pa}$ corresponding to steel.

In case (i), we consider a uniform beam with a rectangular cross-section of constant thickness $t = 0.015\,{\rm m}$ and width $b = 0.05\,{\rm m}$. The first eigenvalue $\lambda_1$ of \eqref{eq:steppedfem} can be computed analytically, \cite{Eisley2011}, as 
\begin{equation*}
\lambda_1 = \frac{4\pi^2 t^3}{L^2} \approx 1.33240 \times 10^{-4}\,{\rm m}
\end{equation*}
and consequently
$$
P_1 = \frac{E\,b\,\lambda_1}{12} \approx 1.165847 \times 10^5\, {\rm N}.
$$
Nevertheless, to verify the accuracy of the proposed bounds, we compute lower bound $\lambda_{h,1}^{\text{low}} = \lambda_{h,1}/(1+\lambda_{h,1}\widehat C_h^2)$ and upper bound $\lambda_{h,1}^{\text{up}} = \lambda_{h,1}$ numerically, by the cubic Hermite finite elements as in Corollary~\ref{co:steppedtwo-sided}.
Since the exact eigenvalue $\lambda_1$ is known analytically, we can compare the true relative errors $\Erellow = (\lambda_1 - \lambda_{h,1}^{\text{low}})/\lambda_1$ and $\Erelup (\lambda_{h,1}^{\text{up}} - \lambda_1)/\lambda_1$.

For numerical calculations, we use uniform partitions of the beam into $N = 2, 4, 8, 16$, $32, 64, 128$ elements,
the mesh size $h=1/N$, and the interpolation constant 
$\widehat C_h^2 = h^2/(4\pi^2t^3) \approx 7050.27h^2$.
Table~\ref{ta:uniform} shows the computed lower and upper bounds $\lambda_{h,1}^{\text{low}}$ and $\lambda_{h,1}^{\text{up}}$ on $\lambda_1$, their relative errors and experimental orders of convergence (EOC).
We observe that the lower bound on the first eigenvalue converges approximately quadratically, while the order of convergence of the upper bound approaches four. 

\begin{table}[htb]
\centering
\caption{Case (i), uniform clamped Euler--Bernoulli beam with a rectangular cross-section ($b=0.05, t= 0.015$).
Lower and upper bounds $\lambda_{h,1}^{\text{low}}$ and $\lambda_{h,1}^{\text{up}}$ of the first eigenvalue, their relative errors $\Erellow$, $\Erelup$, and experimental orders of convergence (EOC).}
\label{ta:uniform}
\begin{tabular}{lcccccc}
\toprule
$h$ & $\lambda_{h,1}^{\text{low}}$ & $\Erellow$ & EOC & $\lambda_{h,1}^{\text{up}}$ & $\Erelup$ & EOC \\
\midrule
 1/2   & 1.077154\E{-4} & 1.9157\E{-1} &       & 1.350000\E{-4} & 1.3212\E{-2} &       \\
 1/4   & 1.262895\E{-4} & 5.2163\E{-2} & 1.8767 & 1.342419\E{-4} & 7.5223\E{-3} & 0.8126  \\
 1/8   & 1.312560\E{-4} & 1.4888\E{-2} & 1.8089 & 1.333079\E{-4} & 5.1214\E{-4} & 3.8766 \\
 1/16  & 1.327255\E{-4} & 3.8585\E{-3} & 1.9480 & 1.332440\E{-4} & 3.2766\E{-5} & 3.9663 \\
 1/32  & 1.331099\E{-4} & 9.7355\E{-4} & 1.9867 & 1.332399\E{-4} & 2.0602\E{-6} & 3.9913 \\
 1/64  & 1.332072\E{-4} & 2.4395\E{-4} & 1.9967 & 1.332397\E{-4} & 1.2899\E{-7} & 3.9975 \\
 1/128 & 1.332315\E{-4} & 6.1023\E{-5} & 1.9992 & 1.332397\E{-4} & 8.3319\E{-9} & 3.9525 \\
\bottomrule
\end{tabular}
\end{table}

In case (ii), we consider a stepped beam with a rectangular cross-section. We use the same setting as for the uniform beam in case (i), except for the thickness $t$ that is now a piecewise constant function defined on eight segments of equal length
with values $0.0155$, $0.010$, $0.0153$, $0.0192$, $0.0192$, $0.053$, $0.010$, and $0.0155 \,\rm{m}$, respectively. This beam thickness is designed to be optimal in the sense of stability. It maximizes the critical buckling load by solving a shape optimization problem subject to a prescribed volume and bound constraints as described in \cite{BurMachRad2021, Haslinger2003}.

Since the exact eigenvalues are not known, we solve the eigenvalue problem \eqref{eq:steppedproblem} approximately by using the cubic Hermite elements on a sequence of uniformly refined meshes that are aligned with the piecewise constant thickness $t$. 
Table~\ref{ta:stepped8} presents the lower and upper bounds on $\lambda_1$ computed according to Corollary~\ref{co:steppedtwo-sided}. In this table, the unknown relative errors $\Erellow$ and $\Erelup$ are estimated by $\eta_\text{rel} = (\lambda_{h,1}^{\text{up}} - \lambda_{h,1}^{\text{low}})/\lambda_{h,1}^{\text{up}}$. Note that $\eta_\text{rel}$ is an upper bound on both $\Erellow$ and $\Erelup$. We clearly observe that the experimental order of convergence of $\eta_\text{rel}$ approaches two. 
It is in agreement with the results for the uniform beam, because the estimate $\eta_\text{rel}$ of the true relative errors is dominated by the error of the less accurate lower bound.

\begin{table}[htb]
\centering
\caption{Case (ii), stepped clamped beam with piecewise constant thicknesses in eight equally long segments. Lower and upper bounds $\lambda_{h,1}^{\text{low}}$ and $\lambda_{h,1}^{\text{up}}$ of the first eigenvalue,
the corresponding estimate $\eta_\text{rel}$ of the relative error, and experimental orders of convergence (EOC).}
\label{ta:stepped8}
\begin{tabular}{lcccc}
\toprule
$h$ & $\lambda_{h,1}^{\text{low}}$ & $\lambda_{h,1}^{\text{up}}$ & $\eta_\text{rel}$ & EOC \\
\midrule
1/8     &   1.501389\E{-4}  &1.596242\E{-4}  &  5.9423\E{-2} &           \\
1/16    &   1.570246\E{-4}  &1.595028\E{-4}  &  1.5537\E{-2} &   1.9353  \\
1/32    &   1.588612\E{-4}  &1.594879\E{-4}  &  3.9297\E{-3} &   1.9832  \\
1/64    &   1.593297\E{-4}  &1.594869\E{-4}  &  9.8532\E{-4} &   1.9958  \\
1/128   &   1.594475\E{-4}  &1.594868\E{-4}  &  2.4651\E{-4} &   1.9989  \\
1/256   &   1.594770\E{-4}  &1.594868\E{-4}  &  6.1639\E{-5} &   1.9997  \\
\bottomrule
\end{tabular}
\end{table}

In case (iii), we consider the uniform Euler–Bernoulli beam with circular cross-section of radius $r=0.01\,\rm{m}$.
The second moment of area about the centroidal axis is given by $I=\pi r^4/4$. 
The first eigenvalue is given by the standard analytical formula 
\begin{equation*}
\lambda_1 = \frac{4\pi^2 r^4}{L^2} \approx 3.947842 \times 10^{-7}\,{\rm m}
\end{equation*}
and thus
\begin{equation*}
P_1 = \frac{E\pi \lambda_1}{4} \approx 6.511318 \times 10^{4}\,{\rm N}.
\end{equation*}
Table~\ref{ta:circular} shows the computed lower and upper bounds $\lambda_{h,1}^{\text{low}}$ and $\lambda_{h,1}^{\text{up}}$ on $\lambda_1$, their relative errors $\Erellow$ and $\Erelup$, and experimental orders of convergence.
In agreement with case~(i), we observe the second order of convergence for the lower bound and the fourth order of convergence for the upper bound.

\begin{table}[htb]
\centering
\caption{Case (iii), uniform clamped Euler--Bernoulli beam with a circular cross-section of radius $r=0.01\,\rm{m}$.
Lower and upper bounds $\lambda_{h,1}^{\text{low}}$ and $\lambda_{h,1}^{\text{up}}$ of the first eigenvalue, their relative errors $\Erellow$ and $\Erelup$, and experimental orders of convergence (EOC).}
\label{ta:circular}
\begin{tabular}{lcccccc}
\toprule
$h$ & $\lambda_{h,1}^{\text{low}}$ & $\Erellow$ & EOC & $\lambda_{h,1}^{\text{up}}$ & $\Erelup$ & EOC \\
\midrule
 1/2   & 3.191567\E{-7} & 1.9157\E{-1} &       & 4.000000\E{-7} & 1.3212\E{-2} &       \\
 1/4   & 3.741910\E{-7} & 5.2163\E{-2} & 1.8767 & 3.977539\E{-7} & 7.5223\E{-3} & 0.8126 \\
 1/8   & 3.889066\E{-7} & 1.4888\E{-2} & 1.8089 & 3.949864\E{-7} & 5.1214\E{-4} & 3.8766 \\
 1/16  & 3.932609\E{-7} & 3.8585\E{-3} & 1.9480 & 3.947971\E{-7} & 3.2766\E{-5} & 3.9663 \\
 1/32  & 3.943998\E{-7} & 9.7355\E{-4} & 1.9867 & 3.947850\E{-7} & 2.0602\E{-6} & 3.9913 \\
 1/64  & 3.946879\E{-7} & 2.4395\E{-4} & 1.9967 & 3.947842\E{-7} & 1.2900\E{-7} & 3.9973 \\
 1/128 & 3.947601\E{-7} & 6.1023\E{-5} & 1.9992 & 3.947842\E{-7} & 8.3956\E{-9} & 3.9416 \\
\bottomrule
\end{tabular}
\end{table}

In case (iv), we consider the conical beam with circular cross-section. Its radius $r(x)$ linearly decreases from the value $r(0) = 0.015\,{\rm m}$ to $r(L)=0.01\,{\rm m}$.
The second moment of area about the centroidal axis varies and is given by $I(x)=\pi r^4(x)/4$.
It is an example of a beam with smoothly varying bending stiffness. 
An analytical solution is not known, so we resort to approximations by the cubic Hermite finite elements \eqref{eq:fem}. 
To find lower and upper bounds on the critical buckling load $P_1$,
we apply Corollary~\ref{co:two-sided} using the auxiliary bending problem \eqref{eq:auxweak} with piecewise constant bending stiffness.

Table~\ref{ta:decreasing} presents the computed lower and upper bounds $\lambda_{h,1}^{\text{low}}$ and $\lambda_{h,1}^{\text{up}}$ on $\lambda_1 = 4 P_1 / (E \pi)$.
The corresponding unknown relative errors $\Erellow$ and $\Erelup$ are estimated from above by $\eta_\text{rel} = (\lambda_{h,1}^{\text{up}} - \lambda_{h,1}^{\text{low}})/\lambda_{h,1}^{\text{up}}$.
The final column of Table~\ref{ta:decreasing} presents the experimental order of convergence of $\eta_\text{rel}$. 
This convergence order reduces to one due to the piecewise constant approximation of the bending stiffness in \eqref{eq:auxweak}.
Thus, the resulting lower bounds, while guaranteed, may be relatively coarse. 
Their accuracy could be substantially improved by employing the Lehmann–Goerisch method, which yields lower bounds with optimal convergence rates \cite{GoeHau1985, Vejchodsky2018}. However, a detailed description of the Lehmann–Goerisch method is technically involved and lies beyond the scope of the present paper.

\begin{table}[htb]
\centering
\caption{
Case (iv), the conical clamped beam. 
Lower and upper bounds $\lambda_{h,1}^{\text{low}}$ and $\lambda_{h,1}^{\text{up}}$ of the first eigenvalue, the corresponding estimates $\eta_\text{rel}$ of the relative error and experimental orders of convergence (EOC).}
\label{ta:decreasing}
\begin{tabular}{lcccc}
\toprule
$h$ & $\lambda_{h,1}^{\text{low}}$ & $\lambda_{h,1}^{\text{up}}$ & $\eta_\text{rel}$ & EOC \\
\midrule
1/8     &   7.782018\E{-7}  &8.889449\E{-7}  &  1.246\E{-1} &           \\
1/16    &   8.366585\E{-7}  &8.883111\E{-7}  &  5.815\E{-2} &   1.0993  \\
1/32    &   8.636850\E{-7}  &8.882674\E{-7}  &  2.767\E{-2} &   1.0711  \\
1/64    &   8.763233\E{-7}  &8.882646\E{-7}  &  1.344\E{-2} &   1.0417  \\
1/128   &   8.823860\E{-7}  &8.882644\E{-7}  &  6.618\E{-3} &   1.0225  \\
1/256   &   8.853489\E{-7}  &8.882644\E{-7}  &  3.282\E{-3} &   1.0116  \\
\bottomrule
\end{tabular}
\end{table}

The derived lower and upper bounds are not limited to the first eigenvalues associated with the fundamental buckling mode but also to higher eigenvalues corresponding to successive buckling modes. 
Figure~\ref{fi:highereigenvalues_i} presents a log–log plot of the relative errors $\Erellow$ and $\Erelup$ of the lower and upper bounds as functions of the mesh size for the first five eigenvalues in the case (i) of the uniform beam with rectangular cross-section.
We observe that approximations of lower eigenvalues are more accurate than approximations of the higher ones. However, the speeds of convergence for higher eigenvalues are the same as for the first eigenvalue. 

We also present the results for the first five eigenvalues in case (ii), where the exact eigenvalues are not known. Therefore, in Figure~\ref{fi:highereigenvalues_ii}, we plot the estimate $\eta_\text{rel}$ on both $\Erellow$ and $\Erelup$. As above, the estimate $\eta_\text{rel}$ is dominated by the error of the lower bound, and we observe its second-order convergence rate for all eigenvalues. We also observe more accurate approximations of lower eigenvalues.
Corresponding curves for cases (iii) and (iv) are similar to those in Figures~\ref{fi:highereigenvalues_i} and \ref{fi:highereigenvalues_ii}, respectively, and we do not present them.

\begin{figure}[tb]
    \centering
    \includegraphics[width=\textwidth]{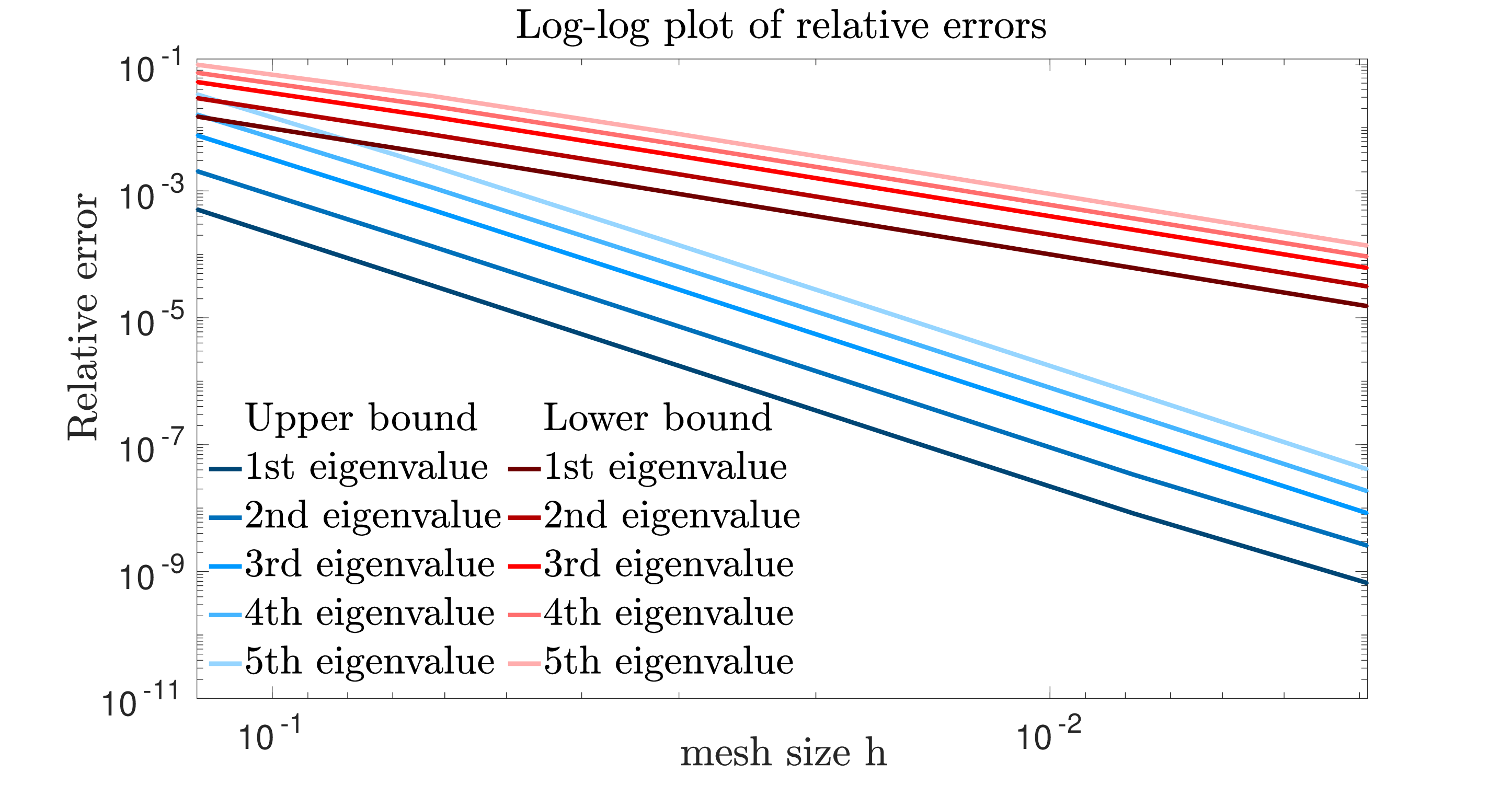} 
    \caption{Case (i), uniform beam with rectangular cross-section. Log-log plot of relative errors $\Erellow$ (in red) and $\Erelup$ (in blue) for the first five eigenvalues.
    }
    \label{fi:highereigenvalues_i}
\end{figure}

\begin{figure}[htb] 
    \centering
    \includegraphics[width=\textwidth]{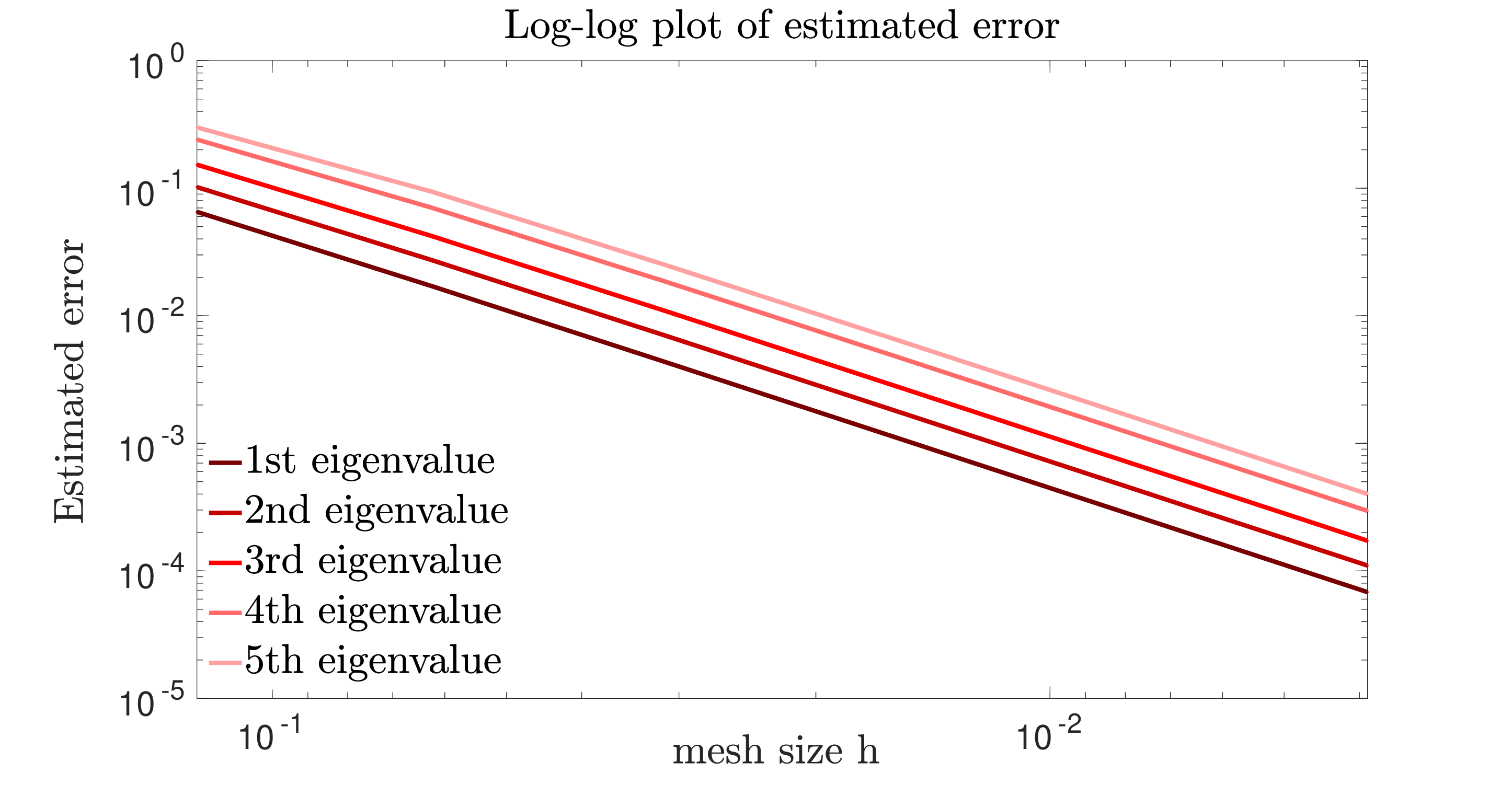} 
    \caption{Case (ii), a stepped beam with rectangular cross-section. Log-log plot of estimates of the relative error $\eta_\text{rel}$ for the first five eigenvalues.
    }
    \label{fi:highereigenvalues_ii}
\end{figure}

\section{Conclusions and Future Work}
\label{eq:conclusion}	

We have presented a method to obtain two-sided bounds for eigenvalues in the Euler--Bernoulli beam buckling problem. Using a combination of the cubic Hermite elements and an interpolation inequality, we derived a formula for a guaranteed lower bound for all eigenvalues. Especially interesting is the application to the first eigenvalue corresponding to the critical buckling load. The method is especially efficient and easy to apply in the case of beams with piecewise constant bending stiffness, such as beams with stepped thickness.
Since the critical load of the nonlinear Gao beam model coincides in this case with the first eigenvalue of the classical Euler–Bernoulli beam, the derived bounds can be directly used for the physically more relevant Gao beam model.

In the general case of varying bending stiffness $EI$ along the length of the beam, we compute guaranteed lower bounds on eigenvalues by solving an auxiliary problem with piecewise constant bending stiffness.

Although the derivation was presented for clamped boundary conditions, the extension to other types of boundary conditions is straightforward. 
The mesh-dependent constant $C_h$ given in \eqref{eq:Ch} remains the same for all boundary conditions. Indeed, the proofs of crucial Lemmata~\ref{le:interpestim} and \ref{le:auIhu} depend solely on properties of the interpolation operator $\cI_h u$ and, in particular, on the facts that the function $w = u - \mathcal{I}_h u$ vanishes together with its derivatives at all points of the finite element partition. Consequently, $w|_{K_k} \in H_0^2(K_k)$ and $w'|_{K_k} \in H_0^1(K_k)$ for all element $K_k = [x_k,x_{k+1}]$, $k=1,2,\dots,N$.
Of course, the choice of boundary conditions affects the value of the first eigenvalue of the associated beam buckling problem, see e.g., \cite{Eisley2011} for more details.
 
Standard upper bounds obtained by the Rayleigh--Ritz method have the convergence of order four.
Numerical experiments demonstrate that the derived lower bounds converge quadratically to the true eigenvalue in the case of piecewise constant bending stiffness and linearly for generally varying coefficients. Although not optimally convergent, these lower bounds are sufficiently accurate to provide valuable information about the buckling loads, which may guide engineering decisions.
	
The key advantages of this approach are its simplicity and reliance on standard computational outputs (cubic Hermite elements and known constants). This makes it attractive for integration into finite element software to provide two-sided error bounds for eigenvalue computations automatically.
	
If needed, lower bounds with an optimal speed of convergence can be obtained by combining the proposed approach with the Lehmann--Goerish method \cite{GoeHau1985,Lehmann1949,Lehmann1950}. The Lehmann--Goerish method provides highly accurate and optimally convergent lower bounds on eigenvalues, provided an \emph{a priori} known, perhaps rough, lower bound on some eigenvalue is available. The needed \emph{a priori} lower bound can be easily computed by using Theorem~\ref{th:two-sided}. The advantage of such an approach would be optimally convergent lower bounds even for higher-order approximations. 
On top of that, the Lehmann--Goerish method provides a natural means to steer the automatic mesh adaptation. However, it is more computationally demanding, and its implementation is sophisticated.

Generalization of this approach to plate buckling is well possible, see \cite{CarGal2014} for an application of these ideas to a biharmonic problem in a two-dimensional domain.

In conclusion, the combination of mathematical analysis and numerical verification provided in this paper offers a reliable way to bracket eigenvalues for the Euler--Bernoulli beam. We anticipate that the approach can be a useful addition to the toolkit of computational mechanics, enhancing confidence in stability predictions for beams and related structural elements.

\backmatter





\bmhead{Acknowledgements}
Jitka Machalová gratefully acknowledges the support of the grant IGA\_PrF\_2026\_018, Mathematical models. 
Jana Burkotová gratefully acknowledges the support by the Ministry of Education, Youth and Sports of the Czech Republic (MEYS CR) under the project Hydrodynamic design of pumps CZ.02.1.01/0.0/0.0/17\_049/0008408. 
Tom\'a\v{s} Vejchodský acknowledges the support of the Czech Science Foundation, grant no. GA23-06159S, and the institutional funding of the Czech Academy of Sciences, RVO 67985840.

\bibliography{vejchod_aee}

\end{document}